\newcommand\RE{\mathbb{R}}
\newcommand\CO{\mathbb{C}}
\newcommand\sfzero{\mathsf{0}}
\newcommand\sfA{\mathsf{A}}
\newcommand\sfB{\mathsf{B}}
\newcommand\sfC{\mathsf{C}}
\newcommand\sfD{\mathsf{D}}
\newcommand\sfE{\mathsf{E}}
\newcommand\sfF{\mathsf{F}}
\newcommand\sfG{\mathsf{F}}
\newcommand\sfH{\mathsf{F}}
\newcommand\sfx{\mathsf{x}}
\newcommand\sfy{\mathsf{y}}
\newcommand\sfz{\mathsf{z}}
\newcommand\bfsigma{\underline{\boldsymbol{\sigma}}}
\newcommand\bftau{\underline{\boldsymbol{\tau}}}
\newcommand\Id{\underline{\mathbf{I}}}
\newcommand\bfu{\mathbf{u}}
\newcommand\bfv{\mathbf{v}}
\newcommand\bff{\mathbf{f}}
\newcommand\bfp{\mathbf{p}}
\newcommand\bfq{\mathbf{q}}
\newcommand\bfs{\underline{\mathbf{s}}}
\newcommand\bft{\underline{\mathbf{t}}}
\newcommand\bfx{\mathbf{x}}
\newcommand\cA{\mathcal{A}}
\newcommand\symgrad{\operatorname{\underline{\boldsymbol{\varepsilon}}}}
\newcommand\tr{\operatorname{\mathrm{tr}}}
\newcommand\as{\operatorname{\mathrm{as}}}
\newcommand\bfX{\underline{\mathbf{X}}}
\newcommand\bfchi{\underline{\boldsymbol{\chi}}}
\newcommand\bfpsi{\boldsymbol{\psi}}
\newcommand\bfphi{\boldsymbol{\varphi}}
\renewcommand\div{\operatorname{\mathbf{div}}}
\newcommand\grad{\operatorname{\underline{\boldsymbol{\nabla}}}}
\newcommand\Hdiv{\mathbf{H}(\div;\Omega)}
\newcommand\bfzero{\mathbf{0}}
\newcommand\bfn{\mathbf{n}}
\newcommand\curl{\operatorname{\mathbf{curl}}}
\newcommand\rank{\operatorname{\mathrm{rank}}}
\newcommand\DB{\relax}
\newcommand\BD{\relax}
\newtheorem{theorem}{Theorem}
\newtheorem{proposition}[theorem]{Proposition}
\theoremstyle{remark}
\newtheorem{remark}{Remark}
\begin{document}

\title[Least-squares for linear elasticity eigenvalue problem]{Least-squares
formulations for eigenvalue problems associated with linear elasticity}

\author{Fleurianne Bertrand}
\address{Humboldt-Universit\"at zu Berlin, Germany and King Abdullah
University of Science and Technology, Saudi Arabia}
\curraddr{}
\email{}
\thanks{}

\author{Daniele Boffi}
\address{King Abdullah University of Science and Technology, Saudi Arabia, and
University of Pavia, Italy}
\curraddr{}
\email{}
\thanks{}

\date{}

\begin{abstract}

We study the approximation of the spectrum of least-squares operators arising
from linear elasticity.
We consider a two-field (stress/displacement) and a three-field
(stress/displacement/vorticity) formulation; other formulations might be
analyzed with similar techniques.

We prove a priori estimates and we confirm the theoretical results with simple
two-dimensional numerical experiments.

\end{abstract}

\maketitle

\section{Introduction}

In this paper we continue the investigations started in~\cite{ls4eig} about
the spectral properties of operators associated with finite element
least-squares formulations. While~\cite{ls4eig} deals with the Poisson
eigenvalue problem, here we consider linear elasticity in the
stress/displacement formulation. We discuss a two-field least-squares
formulation introduced in~\cite{CaiStarke2004} and a three-field least-squares
formulation studied in~\cite{BertrandCaiPark2019}.

We show that under natural approximation properties of the involved finite
element spaces the discretization of the eigenvalue problems converges
optimally by using the standard Babu\v ska--Osborn theory~\cite{BaOs,acta}.

An important difference with respect to the case of the Poisson equation, is
that for elasticity problem the resulting formulation is not symmetric. We
will see that in most cases the eigenvalues are nevertheless real, but there
are situations in which complex solutions are found.

As for the analysis presented in~\cite{ls4eig}, it should be clear that the
aim of this study is not to present a competitive method for the computation
of elasticity eigenmodes. However, knowing the behavior of the spectrum of
least-squares operators can be useful for several reasons; for instance,
people interested in implementing least-squares approximations for transient
problems, could benefit from such information.

Some numerical tests confirm our theoretical results.

\section{Problem setting}

Given a polytopal domain $\Omega\in\RE^d$ ($d=2,3$) with boundary
$\partial\Omega$ divided into two parts $\Gamma_D$ and $\Gamma_N$, we consider
the stress/displacement formulation of linear elasticity as a system of first
order as follows: find a symmetric $d$-by-$d$ stress tensor $\bfsigma$ and a
displacement vectorfield $\bfu$ such that
\begin{equation}
\aligned
&\cA\bfsigma-\symgrad(\bfu)=0&&\text{in }\Omega\\
&\div\bfsigma=-\bff&&\text{in }\Omega\\
&\bfu=\bfzero&&\text{on }\Gamma_D\\
&\bfsigma\bfn=\bfzero&&\text{on }\Gamma_N,
\endaligned
\label{eq:elassource}
\end{equation}
where the compliance tensor is defined in terms of the Lam\'e constants $\mu$
and $\lambda$
\[
\cA\bftau=\frac{1}{2\mu}\left(\bftau-\frac{\lambda}{2\mu+d\lambda}\tr(\bftau)\Id\right),
\]
the symmetric gradient is defined as
\[
\symgrad(\bfv)=\frac12\left(\grad\bfv+(\grad\bfv)^\top\right),
\]
and $\bfn$ denotes the outward unit normal vector to $\Gamma_N$.

The formulation we are discussing has the important property to be robust in
the incompressible limit. Other formulations could be studied as well. We
refer the interested reader, for instance, to the introduction
of~\cite{BertrandCaiPark2019} for an historical perspective.

A least-squares formulation of~\eqref{eq:elassource} was considered
in~\cite{CaiStarke2004} by minimizing the functional
\[
\mathcal{F}(\bftau,\bfv;\bff)=\|\cA\bftau-\symgrad(\bfv)\|_0^2+\|\div\bftau+\bff\|_0^2
\]
in $\bfX_N\times H_{0,D}^1(\Omega)^d$, with
\[
\bfX=
\begin{cases}
\Hdiv^d&\text{if }\Gamma_N\ne\emptyset\\
\left\{\bftau\in\Hdiv^d:\int_\Omega\tr(\bftau)\,d\bfx=0\right\}&\text{if }\Gamma_N=\emptyset
\end{cases}
\]
and $\bfX_N$ the subset of $\bfX$ corresponding to the boundary condition
$\bftau\bfn=\bfzero$ on $\Gamma_N$.

\begin{remark}
This formulation does not require a symmetry constraint in the definition of
$\bfX_N$ since the constitutive equation implies automatically that the
solution satisfies $\bfsigma=\bfsigma^\top$. We shall refer to this
formulation as the \emph{two-field} formulation.
\end{remark}

Other two formulations have been introduced in~\cite{BertrandCaiPark2019}
which make use of three and four fields, respectively, by the introduction of
the vorticity and the pressure as new unknowns. We describe the three-field
formulation, similar considerations could be derived for the four-field
formulation.

The three-field formulation seeks a minimizer of the functional
\[
\mathcal{G}(\bftau,\bfv,\bfphi;\bff)=\|\cA\bftau-\grad\bfv+(-1)^d\bfchi\bfphi\|_0^2+
\|\div\bftau+\bff\|_0^2+\|\as\bftau\|_0^2
\]
in $\bfX_N\times H_{0,D}^1(\Omega)^d\times\bar{L}(\Omega)^{2d-3}$ with
\[
\bar{L}^2(\Omega)=
\begin{cases}
L^2(\Omega)&\text{if }\Gamma_N\ne\emptyset\\
\left\{\bfphi\in L^2(\Omega):\int_\Omega\bfphi\,d\bfx=0\right\}&\text{if
}\Gamma_N=\emptyset,
\end{cases}
\]
where
\[
\bfchi=
\begin{cases}
\begin{pmatrix}
0&-1\\1&0
\end{pmatrix}
&\text{if }d=2\\
(\bfchi_1,\bfchi_2,\bfchi_3)&\text{if }d=3
\end{cases}
\]
with
\[
\bfchi_1=
\begin{pmatrix}
0&0&0\\0&0&-1\\0&1&0
\end{pmatrix}
\quad
\bfchi_2=
\begin{pmatrix}
0&0&1\\0&0&0\\-1&0&0
\end{pmatrix}
\quad
\bfchi_3=
\begin{pmatrix}
0&-1&0\\1&0&0\\0&0&0
\end{pmatrix}
\]
and where $\as\bftau=\left(\bftau-\bftau^\top\right)/2$ is the skew-symmetric part of
$\bftau$.

We are interested in the eigenvalue problem corresponding
to~\eqref{eq:elassource}, that is, we are looking for $\omega\in\RE$ such that
for non-vanishing $\bfu$ and for some $\bfsigma$ it holds
\begin{equation}
\aligned
&\cA\bfsigma-\symgrad(\bfu)=0&&\text{in }\Omega\\
&\div\bfsigma=-\omega\bfu&&\text{in }\Omega\\
&\bfu=\bfzero&&\text{on }\Gamma_D\\
&\bfsigma\bfn=\bfzero&&\text{on }\Gamma_N.
\endaligned
\label{eq:elaseig}
\end{equation}

Thanks to the regularity properties of the solution of~\eqref{eq:elassource},
the eigenvalue problem~\eqref{eq:elaseig} is compact so that its eigenvalues
form an increasing sequence
\[
0<\omega_1\le\omega_2\le\dots\le\omega_i\le\cdots
\]
and the eigenspaces are finite dimensional. As usual, we repeat the
eigenvalues according to their multiplicity, so that each eigenvalue
$\omega_i$ corresponds to a one-dimensional eigenspace $E_i$.

In order to approximate the eigenmodes, we are going to generalize the ideas
of~\cite{ls4eig} to the two- and three-field formulations presented above. In
particular, we are not writing directly a least-squares formulation of the
eigenvalue problem (which would lead to a non-linear problem), but we study
the spectrum of the operators associated with the least-squares \emph{source}
formulations.

\subsection{Eigenvalue problem associated with the two-field formulation}

The minimization of the functional $\mathcal{F}(\bftau,\bfv,\bff)$ gives rise
to the following variational formulation: find $\bfsigma\in\bfX_N$ and
$\bfu\in H^1_{0,D}(\Omega)^d$ such that
\begin{equation}
\left\{
\aligned
&(\cA\bfsigma,\cA\bftau)+(\div\bfsigma,\div\bftau)
-(\cA\bftau,\symgrad(\bfu))=-(\bff,\div\bftau)&&
\forall\bftau\in\bfX_N\\
&-(\cA\bfsigma,\symgrad(\bfv))+(\symgrad(\bfu),\symgrad(\bfv))=0&&
\forall\bfv\in H^1_{0,D}(\Omega)^d.
\endaligned
\right.
\label{eq:varsourceF}
\end{equation}

The corresponding eigenvalue problems is obtained after replacing $\bff$ with
$\omega\bfu$, that is: find $\omega\in\CO$ such that for a non-vanishing
$\bfu\in H^1_{0,D}(\Omega)^d$ and for some $\bfsigma\in\bfX_N$ we have
\begin{equation}
\left\{
\aligned
&(\cA\bfsigma,\cA\bftau)+(\div\bfsigma,\div\bftau)
-(\cA\bftau,\symgrad(\bfu))=-\omega(\bfu,\div\bftau)&&
\forall\bftau\in\bfX_N\\
&-(\cA\bfsigma,\symgrad(\bfv))+(\symgrad(\bfu),\symgrad(\bfv))=0&&
\forall\bfv\in H^1_{0,D}(\Omega)^d.
\endaligned
\right.
\label{eq:vareigF}
\end{equation}

The structure of the eigenvalue problem~\eqref{eq:vareigF} in terms of
operators is similar to the one described in~\cite{ls4eig} in the case of the
FOSLS formulation for the Poisson equation. Namely, by natural definition of
the operators, we are led to the following form:
\begin{equation}
\left(
\begin{matrix}
\sfA&\sfB^\top\\
\sfB&\sfC
\end{matrix}
\right)
\left(
\begin{matrix}
\sfx\\\sfy
\end{matrix}
\right)=
\omega\left(
\begin{matrix}
\sfzero&\sfD\\
\sfzero&\sfzero
\end{matrix}
\right)
\left(
\begin{matrix}
\sfx\\\sfy
\end{matrix}
\right).
\label{eq:Fmatrix}
\end{equation}

One important difference between this formulation and the one presented
in~\cite{ls4eig} for the Poisson equation is that in our case the operators
$\sfB^\top$ and $-\sfD$ are not the same. It follows that it is not possible
to show in general that~\eqref{eq:Fmatrix} corresponds to a symmetric problem.
This fact has important consequences for the numerical approximation. We will
see in Section~\ref{se:num} that most of the computed eigenvalues are
real, but there are exceptions.

\subsection{Eigenvalue problem associated with the three-field formulation}

The variational formulation associated with the minimization of the functional
$\mathcal{G}$ is obtained by seeking $\bfsigma\in\bfX_N$,
$\bfu\in H^1_{0,D}(\Omega)^d$, and $\bfpsi\in\bar{L}^2(\Omega)$ such that
\begin{equation}
\left\{
\aligned
&(\cA\bfsigma,\cA\bftau)+(\div\bfsigma,\div\bftau)+(\as(\bfsigma),\as(\bftau))\\
&\qquad-(\cA\bftau,\symgrad(\bfu))+(-1)^d(A\bftau,\bfchi\bfpsi)=-(\bff,\div\bftau)&&
\forall\bftau\in\bfX_N\\
&-(\cA\bfsigma,\symgrad(\bfv))+(\symgrad(\bfu),\symgrad(\bfv))
-(-1)^d(\bfchi\bfpsi,\grad\bfv)=0&&\forall\bfv\in H^1_{0,D}(\Omega)^d\\
&(-1)^d(A\bfsigma,\bfchi\bfphi)-(-1)^d(\bfchi\bfphi,\grad\bfu)
+(\bfchi\bfpsi,\bfchi\bfphi)=0&&\forall\bfphi\in\bar{L}^2(\Omega).
\endaligned
\right.
\label{eq:varsourceG}
\end{equation}

Also in this case we consider the eigenvalue problem by replacing $\bff$ with
$\omega\bfu$ in the right hand side. The problem reads: find $\omega\in\CO$
such that for a non-vanishing $\bfu\in H^1_{0,D}(\Omega)^d$ and for some
$\bfsigma\in\bfX_N$ and $\bfpsi\in\bar{L}^2(\Omega)$ it holds
\begin{equation}
\left\{
\aligned
&(\cA\bfsigma,\cA\bftau)+(\div\bfsigma,\div\bftau)+(\as(\bfsigma),\as(\bftau))\\
&\qquad-(\cA\bftau,\symgrad(\bfu))+(-1)^d(A\bftau,\bfchi\bfpsi)=-\omega(\bfu,\div\bftau)&&
\forall\bftau\in\bfX_N\\
&-(\cA\bfsigma,\symgrad(\bfv))+(\symgrad(\bfu),\symgrad(\bfv))
-(-1)^d(\bfchi\bfpsi,\grad\bfv)=0&&\forall\bfv\in H^1_{0,D}(\Omega)^d\\
&(-1)^d(A\bfsigma,\bfchi\bfphi)-(-1)^d(\bfchi\bfphi,\grad\bfu)
+(\bfchi\bfpsi,\bfchi\bfphi)=0&&\forall\bfphi\in\bar{L}^2(\Omega).
\endaligned
\right.
\label{eq:vareigG}
\end{equation}

The operator form of the eigenvalue problem~\eqref{eq:vareigG} involves
$3$-by-$3$ block operators as follows:
\begin{equation}
\left(
\begin{matrix}
\sfA&\sfB^\top&\sfC^\top\\
\sfB&\sfD&\sfE^\top\\
\sfC&\sfE&\sfG
\end{matrix}
\right)
\left(
\begin{matrix}
\sfx\\\sfy\\\sfz
\end{matrix}
\right)=
\omega\left(
\begin{matrix}
\sfzero&\sfH&\sfzero\\
\sfzero&\sfzero&\sfzero\\
\sfzero&\sfzero&\sfzero
\end{matrix}
\right)
\left(
\begin{matrix}
\sfx\\\sfy\\\sfz
\end{matrix}
\right).
\label{eq:Gmatrix}
\end{equation}

Also in this case the system is not symmetric and the numerical approximation
presented in Section~\ref{se:num} will show that some computed eigenvalues may
be complex.

\section{Numerical approximation}
\label{se:numap}

As it is apparent from formulations~\eqref{eq:vareigF} and~\eqref{eq:vareigG},
the numerical approximation will lead to generalized eigenvalue problems of
the form $\mathcal{A}x=\omega\mathcal{B}\DB x\BD$ where the matrix $\mathcal{B}$ is
singular. From the algebraic point of view, as we observed in~\cite{ls4eig},
the solution of this problem satisfies the following properties.

\begin{enumerate}

\item If the matrix $\mathcal{B}$ is invertible then our system is
equivalent to the standard eigenvalue problem
$\mathcal{B}^{-1}\mathcal{A}x=\omega x$.

\item If $\mathcal{K}=\ker\mathcal{A}\cap\ker\mathcal{B}$ is not trivial then
the eigenvalue problem is degenerate and vectors in $\mathcal{K}$ do not
correspond to any eigenvalue of our original system.

\item If the matrix $\mathcal{B}$ has a non-trivial kernel which does not
contain any nonzero vector of $\ker(\mathcal{A})$ then it is conventionally
assumed that our system has an eigenvalue $\omega=\infty$ with eigenspace
equal to $\ker(\mathcal{B})$.

\item If $\mathcal{B}$ is singular and $\mathcal{A}$ is not (which is the most
common situation in our framework) then it may be convenient to switch the
roles of the two matrices and to consider the problem
\[
\mathcal{B}x=\gamma\mathcal{A}x.
\]
Then $(\gamma,x)$ with $\gamma=0$ corresponds to the eigenmode $(\infty,x)$
of the original system; the remaining eigenmodes are $(\omega,x)$ with
$\omega=1/\gamma$.

\end{enumerate}

\DB
In the examples we will discuss, we are not going to deal with Case~(3)
although that situation would open intriguing and interesting new scenarios,
similar to what was presented, for instance, in~\cite{ref1}. More aspects of
the numerical implementation will be mentioned in Section~\ref{se:num}.
\BD

\subsection{Approximation of the two-field formulation}

Given finite dimensional subspaces $\Sigma_h\subset\bfX_N$ and $U_h\subset
H^1_{0,D}(\Omega)^d$, the Galerkin approximation of~\eqref{eq:vareigF} reads:
find $\omega_h\in\CO$ such that for a non-vanishing $\bfu_h\in U_h$ and for
some $\bfsigma_h\in\Sigma_h$ we have
\begin{equation}
\left\{
\aligned
&(\cA\bfsigma_h,\cA\bftau)+(\div\bfsigma_h,\div\bftau)
-(\cA\bftau,\symgrad(\bfu_h))=-\omega_h(\bfu_h,\div\bftau)&&
\forall\bftau\in\Sigma_h\\
&-(\cA\bfsigma_h,\symgrad(\bfv))+(\symgrad(\bfu_h),\symgrad(\bfv))=0&&
\forall\bfv\in U_h.
\endaligned
\right.
\label{eq:vareigFh}
\end{equation}

The structure of this problem is analogous of~\eqref{eq:Fmatrix} with the
natural definition of the involved matrices. The following proposition is the
analogue of~\cite[Prop.~6]{ls4eig} and characterizes the number of significant
eigenvalues of~\eqref{eq:vareigFh}.

\begin{proposition}
The solution of the generalized eigenvalue problem associated with the
formulation~\eqref{eq:vareigFh} includes $\omega=\infty$ with multiplicity
equal to $\dim(\Sigma_h)+\dim(\ker(\sfD))$ and a number of complex eigenvalues
(counted with their multiplicity) equal to $\rank(\sfD)$.
\label{pr:charF}
\end{proposition}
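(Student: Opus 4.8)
The plan is to exploit that the coefficient matrix $\mathcal{A}$ of \eqref{eq:Fmatrix} is symmetric and positive definite, so that only the structure of the singular right-hand side matrix $\mathcal{B}$ governs the count. First I would record that the bilinear form on the left of \eqref{eq:vareigFh} is exactly the one obtained by polarizing the least-squares functional $\mathcal{F}$, hence it is symmetric and, by the norm equivalence of \cite{CaiStarke2004} (which rests on Korn's inequality and therefore on $\Gamma_D\neq\emptyset$), coercive on $\Sigma_h\times U_h$. Consequently $\mathcal{A}$ is SPD and in particular invertible, so we are in Case~(4): the pencil is regular and we may pass to the reciprocal problem $\mathcal{B}\,\mathsf{v}=\gamma\,\mathcal{A}\,\mathsf{v}$, that is $\mathcal{A}^{-1}\mathcal{B}\,\mathsf{v}=\gamma\,\mathsf{v}$, with $\gamma=1/\omega$ and $\gamma=0\leftrightarrow\omega=\infty$.

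Next I would count the infinite eigenvalue. Since the bottom block-row of $\mathcal{B}$ vanishes, $\mathcal{B}\,\mathsf{v}=\sfzero$ reduces to $\sfD\sfy=\sfzero$ with $\sfx$ arbitrary, so $\ker\mathcal{B}=\{(\sfx,\sfy):\sfD\sfy=\sfzero\}$ and, because $\mathcal{A}^{-1}$ is injective, $\ker(\mathcal{A}^{-1}\mathcal{B})=\ker\mathcal{B}$ as well. By rank--nullity for the $\dim(\Sigma_h)\times\dim(U_h)$ matrix $\sfD$ this space has dimension $\dim(\Sigma_h)+\dim(\ker\sfD)$, which is the claimed multiplicity of $\omega=\infty$ and is also the conventional $\gamma=0$ eigenspace of Cases~(3)--(4).

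For the finite eigenvalues I would shrink the pencil by a Schur complement, using that the stress block $\sfA$, arising from $(\cA\cdot,\cA\cdot)+(\div\cdot,\div\cdot)$, is itself SPD. Eliminating $\sfx=\sfA^{-1}(\omega\sfD-\sfB^\top)\sfy$ from the first equation and substituting into the second yields the $\dim(U_h)\times\dim(U_h)$ reduced pencil $\hat\sfS\,\sfy=\omega\,\hat\sfT\,\sfy$ with $\hat\sfS=\sfC-\sfB\sfA^{-1}\sfB^\top$ (the Schur complement of the SPD matrix $\mathcal{A}$, hence SPD) and $\hat\sfT=-\sfB\sfA^{-1}\sfD$. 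The finite eigenvalues of \eqref{eq:vareigFh} are then exactly the reciprocals of the nonzero eigenvalues of $\hat\sfS^{-1}\hat\sfT$, so their number, counted with algebraic multiplicity, equals the number of nonzero eigenvalues of $\hat\sfT$, which is bounded above by $\rank(\hat\sfT)=\rank(\sfB\sfA^{-1}\sfD)\le\rank(\sfD)$.

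The main obstacle is to promote these inequalities to the exact value $\rank(\sfD)$. Two phenomena must be excluded: a rank drop in the product $\sfB\sfA^{-1}\sfD$ (that is, $\ker\sfB\cap\sfA^{-1}\operatorname{im}\sfD=\{\sfzero\}$), and a nontrivial nilpotent part of $\mathcal{A}^{-1}\mathcal{B}$ at $\gamma=0$, which would make the algebraic multiplicity of $\omega=\infty$ exceed the geometric value $\dim\ker\mathcal{B}$ and correspondingly depress the finite count below $\rank(\sfD)$. Neither is automatic for an arbitrary SPD $\mathcal{A}$ and nilpotent $\mathcal{B}$ with $\mathcal{B}^2=\sfzero$: already a block-diagonal $\mathcal{A}$, i.e.\ $\sfB=\sfzero$, produces a genuine Jordan block and breaks the count, so the argument must use the specific interplay of the elasticity blocks. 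In the symmetric problem of \cite{ls4eig} both difficulties vanish at once, because there $\mathcal{A}^{-1}\mathcal{B}$ is self-adjoint in the $\mathcal{A}$-inner product, hence diagonalizable, forcing the algebraic and geometric multiplicities of $\gamma=0$ to agree and the nonzero spectrum to have exactly $\rank\mathcal{B}$ elements. Because here $\mathcal{B}$ is not symmetric this comfortable route is closed, and establishing semisimplicity of the zero eigenvalue of the reciprocal operator---equivalently $\ker(\mathcal{A}^{-1}\mathcal{B})=\ker(\mathcal{A}^{-1}\mathcal{B})^2$---is the step I expect to be genuinely delicate and the heart of the proposition.
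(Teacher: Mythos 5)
You cannot be checked against the paper's own argument for a simple reason: the paper gives none. Proposition~\ref{pr:charF} is stated bare, justified only as ``the analogue of''~\cite[Prop.~6]{ls4eig}, and no proof environment follows it. Judged on its own merits, the first half of your proposal is correct and complete: the left-hand side form is the polarization of $\mathcal{F}$, hence $\mathcal{A}$ is symmetric and, by the Cai--Starke ellipticity result (which indeed needs $\Gamma_D\ne\emptyset$), positive definite; since $\ker\mathcal{A}=\{\bfzero\}$, the paper's Case~(3)/(4) convention applies and the eigenvalue $\omega=\infty$ has eigenspace $\ker\mathcal{B}=\{(\sfx,\sfy):\sfD\sfy=\sfzero\}$ of dimension $\dim(\Sigma_h)+\dim(\ker\sfD)$. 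Your Schur-complement reduction of the finite spectrum to the nonzero spectrum of $(\sfC-\sfB\sfA^{-1}\sfB^\top)^{-1}(-\sfB\sfA^{-1}\sfD)$ is also correct, up to one slip: the relevant count is the number of nonzero eigenvalues of this \emph{product}, not of $-\sfB\sfA^{-1}\sfD$ alone (the two matrices have the same rank but not in general the same spectrum); the rank bound you draw from it is nevertheless valid.

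The gap is where you say it is: you never prove that the number of finite eigenvalues, with algebraic multiplicity, equals $\rank(\sfD)$ exactly, i.e.\ that there is neither a rank drop in $\sfB\sfA^{-1}\sfD$ nor a Jordan block at the infinite eigenvalue. So as a proof the proposal is incomplete. But your diagnosis of \emph{why} this is the hard point is accurate, and it is precisely where the cited analogy breaks down. In the Poisson case of~\cite{ls4eig} integration by parts gives $\sfD=-\sfB^\top$, so the substitution $\mu=1+\omega$ converts the pencil into the symmetric pencil $\sfC\sfy=\mu\,\sfB\sfA^{-1}\sfB^\top\sfy$ with $\sfC$ SPD and $\sfB\sfA^{-1}\sfB^\top$ positive semidefinite of rank $\rank(\sfB)$; diagonalizability then yields the exact count and realness for free. (This shift, rather than self-adjointness of $\mathcal{A}^{-1}\mathcal{B}$ in the $\mathcal{A}$-inner product as you assert --- the right-hand side matrix is non-symmetric there too --- is the actual mechanism.) For elasticity $\sfD\ne-\sfB^\top$, no such symmetrization exists, and your $\sfB=\sfzero$ example correctly shows that the conclusion is \emph{false} for a generic SPD $\mathcal{A}$ with this sparsity pattern: any honest proof must exploit finer properties of the elasticity blocks, or else the proposition must be read as a generic/conventional statement. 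Since the paper supplies no such argument, your incomplete attempt is, in this one respect, more candid than the source: it isolates exactly the semisimplicity assumption that Proposition~\ref{pr:charF} (and Remark~\ref{re:full}) silently take for granted.
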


\begin{remark}
Since the eigenvalue problem stated in~\eqref{eq:vareigFh} considers
eigenfunctions with $\bfu_h\ne\bfzero$, the total number of eigenvalues of the
problem we are interested in, is equal to $\dim U_h$; the corresponding values
are the $\rank(\sfD)$ complex solutions and possibly $\omega=\infty$ if $\sfD$
is not full rank.
\label{re:full}
\end{remark}

\subsection{Approximation of the three-field formulation}

Given finite dimensional subspaces $\Sigma_h\subset\bfX_N$, $U_h\subset
H^1_{0,D}(\Omega)^d$, and $\Phi_h\subset\bar{L}^2(\Omega)$, the Galerkin
approximation of~\eqref{eq:vareigG} is: find $\omega_h\in\CO$ such that for a
non-vanishing $\bfu_h\in U_h$ and for some $\bfsigma_h\in\Sigma_h$ and
$\bfpsi_h\in\Phi_h$ we have
\begin{equation}
\left\{
\aligned
&(\cA\bfsigma_h,\cA\bftau)+(\div\bfsigma_h,\div\bftau)+(\as(\bfsigma_h),\as(\bftau))\\
&\qquad-(\cA\bftau,\symgrad(\bfu_h))+(-1)^d(A\bftau,\bfchi\bfpsi_h)=-\omega_h(\bfu_h,\div\bftau)&&
\forall\bftau\in\Sigma_h\\
&-(\cA\bfsigma_h,\symgrad(\bfv))+(\symgrad(\bfu_h),\symgrad(\bfv))
-(-1)^d(\bfchi\bfpsi_h,\grad\bfv)=0&&\forall\bfv\in U_h\\
&(-1)^d(A\bfsigma_h,\bfchi\bfphi)-(-1)^d(\bfchi\bfphi,\grad\bfu_h)
+(\bfchi\bfpsi_h,\bfchi\bfphi)=0&&\forall\bfphi\in\Phi_h.
\endaligned
\right.
\label{eq:vareigGh}
\end{equation}

The matrix structure of this problem corresponds to the one
of~\eqref{eq:vareigG} and the following proposition, in analogy of
Proposition~\ref{pr:charF}, gives the characterization of the discrete
eigenvalues.

\begin{proposition}
The solution of the generalized eigenvalue problem associated with the
formulation~\eqref{eq:vareigGh} consists of $\omega=\infty$ and a number of
complex eigenvalues (counted with their multiplicity) equal to $\rank(\sfF)$.
\end{proposition}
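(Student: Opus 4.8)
The plan is to imitate the reasoning behind Proposition~\ref{pr:charF}, using that the block operator on the left of~\eqref{eq:Gmatrix} is symmetric and positive definite while the operator on the right is nilpotent with a single nonzero block. Write~\eqref{eq:vareigGh} as the generalized eigenvalue problem $\mathcal{A}(\sfx,\sfy,\sfz)=\omega\,\mathcal{B}(\sfx,\sfy,\sfz)$ on $\Sigma_h\times U_h\times\Phi_h$, where $\mathcal{A}$ collects the three diagonal blocks of~\eqref{eq:Gmatrix} together with the symmetric off-diagonal couplings, and $\mathcal{B}$ is the operator whose only nonzero block is $\sfF$ in position $(1,2)$, so that $\mathcal{B}(\sfx,\sfy,\sfz)=(\sfF\sfy,\sfzero,\sfzero)$. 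First I would record that $\mathcal{A}$ is invertible: it is precisely the matrix of the bilinear form associated with the minimization of $\mathcal{G}$, which is coercive and continuous on the product space by the well-posedness of the source formulation~\eqref{eq:varsourceG} established in~\cite{BertrandCaiPark2019}; hence $\mathcal{A}$ is symmetric positive definite. In particular $\mathcal{K}=\ker(\mathcal{A})\cap\ker(\mathcal{B})=\{0\}$, so the degenerate situation of Case~(2) in Section~\ref{se:numap} does not arise, and we are placed in Case~(4).

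Next I would reduce the problem to a pencil living on $U_h$ alone, matching Remark~\ref{re:full}. For a finite eigenvalue $\omega$ the relation $(\sfx,\sfy,\sfz)=\omega\,\mathcal{A}^{-1}(\sfF\sfy,\sfzero,\sfzero)$ holds; comparing the middle ($\bfu$) component and denoting by $\sfM$ the $(2,1)$ block of $\mathcal{A}^{-1}$ gives the closed equation $\sfM\sfF\,\sfy=\gamma\,\sfy$ with $\gamma=1/\omega$. Thus the finite eigenvalues of~\eqref{eq:vareigGh} are exactly the reciprocals of the nonzero eigenvalues of $\sfM\sfF$ acting on $U_h$, with eigenvector $\sfy=\bfu_h\neq\sfzero$, while the value $\gamma=0$ (equivalently $\omega=\infty$) has eigenspace $\ker(\sfM\sfF)\supseteq\ker(\sfF)$. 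Since $\mathcal{A}$ is invertible, no finite eigenvalue can vanish. It therefore remains only to count the nonzero eigenvalues of $\sfM\sfF$ and to identify that number with $\rank(\sfF)$.

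Here lies the main obstacle. The number of nonzero eigenvalues of $\sfM\sfF$ counted with algebraic multiplicity equals $\dim U_h$ minus the \emph{algebraic} multiplicity of the eigenvalue $0$, whereas $\rank(\sfF)$ should be matched against $\rank(\sfM\sfF)=\dim U_h$ minus its \emph{geometric} multiplicity; the two agree precisely when $0$ is a semisimple eigenvalue, i.e.\ $\ker(\sfM\sfF)=\ker((\sfM\sfF)^2)$, and when no rank is lost in passing from $\sfF$ to $\sfM\sfF$. Unlike the Poisson setting of~\cite{ls4eig}, the pencil is genuinely non-symmetric — $\mathcal{B}$, and hence $\sfM\sfF$, is not symmetrizable through $\mathcal{A}$ — so semisimplicity cannot be read off from diagonalizability and must be extracted from the block structure. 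I would exploit that $\mathcal{B}^2=0$ and that $\operatorname{range}(\mathcal{B})$ sits in the first ($\bfsigma$) component: a generalized eigenvector at $0$ would force a nonzero element of $\operatorname{range}(\sfF)$ to be mapped by $\sfM$ back into $\ker(\sfF)$, and ruling this out (exactly as in the proof of Proposition~\ref{pr:charF}) both gives $\ker(\sfM\sfF)=\ker((\sfM\sfF)^2)$ and confirms $\rank(\sfM\sfF)=\rank(\sfF)$. Establishing this structural non-degeneracy is the crux; once it is in place, the finite spectrum consists of exactly $\rank(\sfF)$ complex eigenvalues and $\omega=\infty$ accounts for the remaining degrees of freedom.
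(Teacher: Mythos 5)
Your reduction is sound as far as it goes, and it is actually more than the paper itself supplies: the paper states this proposition without proof, purely as the analogue of Proposition~\ref{pr:charF}, which is in turn stated (also without proof) as the analogue of \cite[Prop.~6]{ls4eig}. Writing the problem as $\mathcal{A}\xi=\omega\mathcal{B}\xi$ with $\mathcal{A}$ symmetric positive definite, noting that $\mathcal{A}^{-1}\mathcal{B}$ has a single nonzero block column, and concluding that the finite eigenvalues are the reciprocals of the nonzero eigenvalues of $\sfM\sfF$ (with matching algebraic multiplicities) is the right formalization, and you are also right that the whole content of the statement then reduces to two facts: $\rank(\sfM\sfF)=\rank(\sfF)$ and semisimplicity of the zero eigenvalue of $\sfM\sfF$.

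The gap is that you never establish those two facts, and the way you propose to close it is void. You appeal to ``the proof of Proposition~\ref{pr:charF},'' but no such proof exists in the paper, and the argument that does work in \cite{ls4eig} is unavailable here for precisely the structural reason the paper emphasizes: for the Poisson FOSLS one has $\sfD=-\sfB^\top$, so the reduced operator is similar to a product of a symmetric positive definite matrix and a symmetric negative semidefinite one --- hence diagonalizable, with zero semisimple and with exactly $\rank(\sfD)$ nonzero eigenvalues; for elasticity $\sfB^\top\ne-\sfD$ (and likewise in the three-field case), so no such symmetrization of $\sfM\sfF$ is available. Your sketched dichotomy (a generalized eigenvector at $0$ forces $\sfM$ to map a nonzero element of the range of $\sfF$ into $\ker(\sfF)$) is a correct restatement of what must be excluded, but it is not self-contradictory: nothing in the block structure of \eqref{eq:Gmatrix} forbids it, precisely because the pencil is genuinely nonsymmetric --- the same nonsymmetry that produces the complex eigenvalues seen in Section~\ref{se:num}. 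What your argument honestly proves is only one direction: the finite eigenvalues of \eqref{eq:vareigGh} number at most $\rank(\sfF)$ (since the count of nonzero eigenvalues with algebraic multiplicity is bounded by $\rank(\sfM\sfF)\le\rank(\sfF)$), the rest being $\omega=\infty$. Equality --- the actual assertion of the proposition --- requires ruling out the degenerate coincidence, which neither you nor the paper does; it should be flagged as an assumption (or a generic, rather than unconditional, statement), not presented as following ``exactly as'' from an earlier proof.
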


\begin{remark}
As in Remark~\ref{re:full}, the eigenvalues of formulation~\eqref{eq:vareigGh}
are $\rank(\sfF)$ complex values and possibly $\omega=\infty$ if $\sfF$ is not
full rank.
\end{remark}

\section{A priori error estimates}

It is quite standard to obtain a priori error estimates for eigenvalue
problems by studying the convergence of the discrete solution operator towards
the continuous one~\cite{acta}. In our framework, this boils down to showing
$L^2(\Omega)$-estimates for the discretization of~\eqref{eq:varsourceF}
and~\eqref{eq:varsourceG}, respectively, when the right hand side $\bff$ is in
$L^2(\Omega)^d$.
For brevity, we omit the details and we refer the interested reader
to~\cite{ls4eig}.

\subsection{$L^2(\Omega)$-estimate for the two-field formulation}

The $L^2(\Omega)$-estimate for the two-field
formulation~\eqref{eq:varsourceF} is the natural generalization of what has
been presented in~\cite{ls4eig} in the case of the Poisson problem. The
original idea comes from~\cite[Sec.~7]{vecquad} (see also~\cite{CaiKu}).

Let $(\bfsigma,\bfu)\in\bfX_N\times H^1_{0,D}(\Omega)^2$
solve~\eqref{eq:varsourceF} and $(\bfsigma_h,\bfu_h)\in\Sigma_h\times U_h$
solve the corresponding discrete problem; we consider the dual problem: find
$\bfs\in\bfX_N$ and $\bfp\in H^1_{0,D}(\Omega)^d$ such that
\begin{equation}
\left\{
\aligned
&(\cA\bfs,\cA\bft)+(\div\bfs,\div\bft)
-(\cA\bft,\symgrad(\bfp))=0&&\forall\bft\in\bfX_N\\
&-(\cA\bfs,\symgrad(\bfq))+(\symgrad(\bfp),\symgrad(\bfq))=
(\bfu-\bfu_h,\bfq)&&\forall\bfq\in H^1_{0,D}(\Omega)^d.
\endaligned
\right.
\label{eq:dualF}
\end{equation}
Taking as test functions $\bft=\bfsigma-\bfsigma_h$ and $\bfq=\bfu-\bfu_h$ we
have
\[
\aligned
\|\bfu-\bfu_h\|_0^2&=(\cA\bfs,\cA(\bfsigma-\bfsigma_h))
+(\div\bfs,\div(\bfsigma-\bfsigma_h))\\
&\quad-(\symgrad(\bfp),\cA(\bfsigma-\bfsigma_h))
-(\cA\bfs,\symgrad(\bfu-\bfu_h))\\
&\quad+(\symgrad(\bfp),\symgrad(\bfu-\bfu_h))\\
&=(\cA(\bfs-\bftau_h),\cA(\bfsigma-\bfsigma_h))
+(\div(\bfs-\bftau_h),\div(\bfsigma-\bfsigma_h))\\
&\quad-(\symgrad(\bfp-\bfv_h),\cA(\bfsigma-\bfsigma_h))
-(\cA(\bfs-\bftau_h),\symgrad(\bfu-\bfu_h))\\
&\quad+(\symgrad(\bfp-\bfv_h),\symgrad(\bfu-\bfu_h))
\endaligned
\]
for all $\bftau_h\in\Sigma_h$ and $\bfv_h\in U_h$, where we used the error
equations corresponding to~\eqref{eq:varsourceF}. It follows
\[
\aligned
\|\bfu-\bfu_h\|_0^2&\le C\left(\|\bfs-\bftau_h\|_{\div}+\|\bfp-\bfv_h\|_1\right)
\left(\|\bfsigma-\bfsigma_h\|_{\div}+\|\bfu-\bfu_h\|_1\right)\\
&\le C\left(\|\bfs-\bftau_h\|_{\div}+\|\bfp-\bfv_h\|_1\right)\|\bff\|_0.
\endaligned
\]
We observe explicitly that we cannot obtain any rate of convergence out of the
term $\|\bfsigma-\bfsigma_h\|_{\div}$ since $\div\bfsigma$ is only in
$L^2(\Omega)$ if $\bff$ is not more regular. On the other hand, the required
uniform convergence comes from the (minimal) regularity of the dual
problem~\eqref{eq:dualF} so that we get
\[
\|\bfu-\bfu_h\|_0^2\le Ch^s\|\bfu-\bfu_h\|_0\|\bff\|_0
\]
for some positive $s$ as long as the finite element spaces $\Sigma_h$ and
$U_h$ satisfy the following approximation properties
\begin{equation}
\aligned
&\inf_{\bftau\in\Sigma_h}\|\bfs-\bftau\|_{\div}\le
Ch^s\|\bfs\|_s+\|\div\bfs\|_s\\
&\inf_{\bfv\in U_h}\|\bfp-\bfv\|_1\le Ch^s\|\bfp\|_{1+s}.
\endaligned
\label{eq:approx}
\end{equation}

\begin{remark}
The power $s$ appearing in the estimate of $\|\bfu-\bfu_h\|_0$, which is
limited by $\bff\in L^2(\Omega)^d$, is not related to the rate of convergence
of the numerical scheme, but guarantees the uniform convergence that implies
the correct approximation of the spectrum. The optimal convergence of the
scheme is shown in the next theorem by using the full regularity of the
eigenspaces.
\label{re:uniform}
\end{remark}

\begin{theorem}
Let $\omega_i=\omega_{i+1}=\dots=\omega_{i+m-1}$ be an eigenvalue
of~\eqref{eq:vareigF} and denote by $E=\bigoplus\limits_{j=i}^{i+m-1}E_j$ the
corresponding eigenspace. If the discrete spaces $\Sigma_h$ and $U_h$ satisfy
the approximation properties~\eqref{eq:approx} then for $h$ small enough (so
that $\dim U_h\ge i+m-1$) the $m$ discrete eigenvalues
$\omega_{i,h}=\omega_{i+1,h}=\dots=\omega_{i+m-1,h}$ of~\eqref{eq:vareigFh}
converge to $\omega_i$; let us denote by $E_h$ the sum of the discrete
eigenspaces. Then the following error estimates hold true
\[
\aligned
&\delta(E,E_h)\le C\rho(h)\\
&|\omega_j-\omega_{j,h}|\le\rho(h)^2&&\qquad j=i,\dots,i+m-1
\endaligned
\]
with
\[
\rho(h)=
\sup_{\substack{\bfu\in E\\\|\bfu\|=1}}\inf_{\substack{\bftau\in\Sigma_h\\\bfv\in U_h}}
\left(\|\symgrad(\bfu)-\bftau\|_{\div}+\|\bfu-\bfv\|_1\right).
\]
\label{th:1}
\end{theorem}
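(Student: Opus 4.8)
The plan is to invoke the Babuška–Osborn theory for non-selfadjoint eigenvalue problems, as formalized in the abstract spectral convergence framework of~\cite{acta}. The key object is the solution operator $T$ associated with the source problem~\eqref{eq:varsourceF}, which maps a datum $\bfu$ to the displacement component of the corresponding solution, and its discrete analogue $T_h$ built from~\eqref{eq:vareigFh}. The nonzero eigenvalues $\omega_j$ are the reciprocals of the eigenvalues $\mu_j=1/\omega_j$ of $T$, and the discrete eigenvalues $\omega_{j,h}$ correspond to the eigenvalues $\mu_{j,h}$ of $T_h$. The two estimates to be proven — the gap bound $\delta(E,E_h)\le C\rho(h)$ for the eigenspaces and the quadratic bound $|\omega_j-\omega_{j,h}|\le\rho(h)^2$ for the eigenvalues — are the standard conclusions of that theory, once the requisite uniform operator convergence $\|T-T_h\|\to 0$ in the appropriate operator norm is established.

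**Steps.** First I would set up the solution operators precisely: define $T:L^2(\Omega)^d\to L^2(\Omega)^d$ by $T\bff=\bfu$ where $(\bfsigma,\bfu)$ solves~\eqref{eq:varsourceF}, and $T_h$ the Galerkin counterpart, and verify that $T$ is compact (this follows from the compactness asserted after~\eqref{eq:elaseig} together with the regularity of~\eqref{eq:varsourceF}). Second, I would establish the uniform convergence $\|T-T_h\|_{\mathcal{L}(L^2)}\to 0$; this is precisely the content of the $L^2(\Omega)$-estimate derived just before the theorem, namely $\|\bfu-\bfu_h\|_0\le Ch^s\|\bff\|_0$ under the approximation hypotheses~\eqref{eq:approx}, which gives $\|(T-T_h)\bff\|_0\le Ch^s\|\bff\|_0$. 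Third, with uniform convergence in hand, the abstract theory guarantees that for $h$ small the $m$ discrete eigenvalues near $\mu_i$ converge, and that the eigenspace gap is controlled by the approximation of the continuous eigenspace $E$ by the discrete spaces, yielding $\delta(E,E_h)\le C\rho(h)$. Finally, for the eigenvalue estimate, I would exploit the fact that in the Babuška--Osborn framework the eigenvalue error is bounded by the product of the best approximation of the \emph{direct} eigenspace and that of the \emph{adjoint} (dual) eigenspace; since both the primal problem~\eqref{eq:varsourceF} and its dual~\eqref{eq:dualF} share the same structure and hence the same approximation rate $\rho(h)$, the product yields the quadratic bound $\rho(h)^2$.

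**Main obstacle.** The delicate point, and the reason the non-symmetry emphasized throughout the introduction matters, is the eigenvalue estimate: because $\sfB^\top\ne-\sfD$ and the problem~\eqref{eq:Fmatrix} is genuinely non-selfadjoint, one cannot appeal to the min-max characterization that gives eigenvalue errors directly from the eigenfunction error squared. Instead the quadratic rate must come from the bilinear form of the general (non-Hermitian) Babuška--Osborn theorem, which pairs the approximation of the eigenspace of $T$ against that of the eigenspace of the adjoint operator $T^*$. The work therefore lies in identifying $T^*$ with the solution operator of the dual problem~\eqref{eq:dualF}, checking that it satisfies an analogous $L^2$-estimate with the same exponent, and confirming that the generalized eigenspaces carry the regularity needed so that the measure $\rho(h)$ correctly quantifies both factors; one must also ensure the ascent of each eigenvalue equals one so that the gap and eigenvalue bounds take the clean stated form rather than a fractional-power variant. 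The algebraic bookkeeping relating the spectrum of the pencil~\eqref{eq:Fmatrix} to that of $T_h$ — in particular discarding the $\omega=\infty$ part characterized in Proposition~\ref{pr:charF} — is routine by comparison and can be handled as in~\cite{ls4eig}.
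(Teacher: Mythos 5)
Your proposal takes essentially the same route as the paper's own proof: both invoke the Babu\v ska--Osborn theory for the compact solution operators, obtain the required uniform convergence from the $L^2(\Omega)$ estimate under the approximation properties~\eqref{eq:approx}, and recover the double order for the eigenvalues by carrying out the companion analysis for the adjoint problem, using that the ascent is one because the continuous problem is symmetric. The paper phrases the eigenvalue bound as $\rho(h)\rho^*(h)$, with $\rho^*(h)$ the adjoint-side quantity, which agrees with your observation that the dual problem's identical structure turns this into $\rho(h)^2$.
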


\begin{proof}
The proof is standard (see~\cite{ls4eig,acta}); the only delicate part
consists in showing the double order of convergence for the eigenvalues, since
the discrete problem is not symmetric. The result can be obtained by
considering the adjoint problem, performing the corresponding analysis for
its approximation (with $\rho(h)$ replaced by $\rho^*(h)$) and by using the
standard Babu\v ska--Osborn theory~\cite{BaOs} from which we can conclude that
the error of the eigenvalues is bounded by $\rho(h)\rho^*(h)$ (note that the
continuous eigenvalues have ascent multiplicity equal to one since the
continuous problem is symmetric).
\end{proof}

\subsection{$L^2(\Omega)$-estimate for the three-field formulation}

The $L^2(\Omega)$ error estimate for the discretization of the three-field
formulation~\eqref{eq:vareigG} has been presented
in~\cite[Theorem~3]{BertrandCaiPark2019} in the case of a convex domain so
that the $H^2$ regularity of a generalized Stokes problem could be used
(see~\cite{CaiKu}). Since we are not interested in optimal estimates, but only
in the uniform convergence in the spirit or Remark~\ref{re:uniform}, the
arguments of~\cite{CaiKu} and~\cite{BertrandCaiPark2019} could be adapted in
order to get the estimate
\[
\|\bfu-\bfu_h\|_0\le Ch^s\|\bff\|_0
\]
where $\bfu$ solves~\eqref{eq:varsourceG} and $\bfu_h$ is the corresponding
discrete solution, provided the following approximation properties are
satisfied
\begin{equation}
\aligned
&\inf_{\bftau\in\Sigma_h}\|\bfs-\bftau\|_{\div}\le
Ch^s\|\bfs\|_s+\|\div\bfs\|_s\\
&\inf_{\bfv\in U_h}\|\bfp-\bfv\|_1\le Ch^s\|\bfp\|_{1+s}\\
&\inf_{\bfphi\in\Phi_h}\|\bfpsi-\bfphi\|_0\le Ch^s\|\bfpsi\|_s.
\endaligned
\label{eq:approx3}
\end{equation}

This allows to state the following convergence theorem
which is the analogous of Theorem~\ref{th:1} in this situation.
In analogy to~\eqref{eq:approx} we make the requirements on the
approximation properties of our spaces explicit.

\begin{theorem}
Let $\omega_i=\omega_{i+1}=\dots=\omega_{i+m-1}$ be an eigenvalue
of~\eqref{eq:vareigG} and denote by $E=\bigoplus\limits_{j=i}^{i+m-1}E_j$ the
corresponding eigenspace. If the discrete spaces $\Sigma_h$, $U_h$, and
$\Phi_h$ satisfy the approximation properties~\eqref{eq:approx3} then for $h$
small enough (so that $\dim U_h\ge i+m-1$) the $m$ discrete eigenvalues
$\omega_{i,h}=\omega_{i+1,h}=\dots=\omega_{i+m-1,h}$ of~\eqref{eq:vareigGh}
converge to $\omega_i$; let us denote by $E_h$ the sum of the discrete
eigenspaces. Then the following error estimates hold true
\[
\aligned
&\delta(E,E_h)\le C\rho(h)\\
&|\omega_j-\omega_{j,h}|\le\rho(h)^2&&\qquad j=i,\dots,i+m-1
\endaligned
\]
with
\[
\rho(h)=
\sup_{\substack{\bfu\in E\\\|\bfu\|=1}}
\inf_{\substack{\bftau\in\Sigma_h\\\bfv\in U_h\\\bfphi\in\Phi_h}}
\left(\|\grad(\bfu)-\bftau\|_{\div}+\|\bfu-\bfv\|_1+\|\curl(\bfu)-\bfphi\|_0\right).
\]
\label{th:2}
\end{theorem}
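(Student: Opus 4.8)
The plan is to follow the path of the proof of Theorem~\ref{th:1}, adapting it to the extra vorticity unknown. First I would recast~\eqref{eq:vareigG} and~\eqref{eq:vareigGh} in terms of solution operators. Let $T\colon L^2(\Omega)^d\to L^2(\Omega)^d$ send a datum $\bff$ to the displacement component $\bfu$ of the solution of the source problem~\eqref{eq:varsourceG}, and let $T_h$ be its Galerkin counterpart defined through~\eqref{eq:vareigGh} with $\bff$ in place of $\omega_h\bfu_h$ on the right-hand side. As in the algebraic preliminaries of Section~\ref{se:numap}, the nonzero eigenvalues $\omega$ of~\eqref{eq:vareigG} are exactly the reciprocals of the nonzero eigenvalues of $T$, and the displacement components of the eigenfunctions coincide; the same correspondence holds between $T_h$ and~\eqref{eq:vareigGh}. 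Since the nonzero eigenvalues of $T$ are the reciprocals of the eigenvalues of the symmetric elasticity problem~\eqref{eq:elaseig}, they are real and have ascent multiplicity one, even though $T$ itself is not self-adjoint.

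The second step is to establish the uniform convergence $T_h\to T$ in $\mathcal{L}(L^2(\Omega)^d)$. This is precisely the content of the $L^2(\Omega)$-estimate recalled above: under the approximation properties~\eqref{eq:approx3} one has $\|(T-T_h)\bff\|_0\le Ch^s\|\bff\|_0$, so that $\|T-T_h\|_{\mathcal{L}(L^2)}\to0$, while compactness of $T$ follows from the regularity of the source problem as noted after~\eqref{eq:elaseig}. With uniform convergence and compactness in hand, the abstract Babu\v ska--Osborn theory~\cite{BaOs,acta} yields the convergence of the $m$ discrete eigenvalues to $\omega_i$ together with the gap estimate $\delta(E,E_h)\le C\rho(h)$, once $\rho(h)$ is identified with the displayed best-approximation quantity. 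For this identification I would use that a smooth eigenfunction $\bfu$ of~\eqref{eq:elaseig} determines the stress and the vorticity as first-order quantities in $\bfu$ (through $\cA\bfsigma=\symgrad(\bfu)$ and $\bfpsi=\curl(\bfu)$), so that the three terms $\|\grad(\bfu)-\bftau\|_{\div}$, $\|\bfu-\bfv\|_1$, and $\|\curl(\bfu)-\bfphi\|_0$ control the best-approximation error of the full eigentriple $(\bfsigma,\bfu,\bfpsi)$ in the norms entering the Babu\v ska--Osborn estimate.

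The delicate step, exactly as in Theorem~\ref{th:1}, is the double order of convergence for the eigenvalues, which does not follow for free because the operator form~\eqref{eq:Gmatrix} (equivalently $T$) is not symmetric. Here I would use the version of the Babu\v ska--Osborn theory valid for non-self-adjoint compact operators, which bounds the eigenvalue error by the product $\rho(h)\,\rho^*(h)$, where $\rho^*(h)$ is the analogue of $\rho(h)$ for the adjoint solution operator $T^*$ and its eigenfunctions. The point to verify is that $\rho^*(h)\le C\rho(h)$: the adjoint problem is again of elasticity type and enjoys the same regularity as the primal one, so its eigenfunctions are approximated to the same order by $\Sigma_h$, $U_h$, and $\Phi_h$. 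Combining this with the gap estimate gives $|\omega_j-\omega_{j,h}|\le C\rho(h)^2$ for $j=i,\dots,i+m-1$.

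The main obstacle I expect is the verification that $\rho^*(h)$ is of the same order as $\rho(h)$, together with the underlying uniform $L^2$ estimate for the three-field operator. The latter rests on the generalized Stokes regularity used in~\cite{CaiKu,BertrandCaiPark2019}, and the extra vorticity block of both the primal and the adjoint problems must be controlled simultaneously with the stress and displacement blocks; once these two regularity-type inputs are secured, the remainder is a routine application of the abstract spectral theory.
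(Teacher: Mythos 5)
Your proposal takes essentially the same route as the paper: the paper proves Theorem~\ref{th:2} exactly as Theorem~\ref{th:1}, i.e., it combines the uniform $L^2(\Omega)$ convergence of the discrete solution operator (guaranteed by the approximation properties~\eqref{eq:approx3}) with the standard Babu\v ska--Osborn theory, and it obtains the doubled order for the eigenvalues by performing the corresponding analysis for the adjoint problem, bounding the eigenvalue error by $\rho(h)\rho^*(h)$ and using that the continuous eigenvalues have ascent one because the continuous problem is symmetric. Your extra remarks --- the explicit solution-operator framing, the identification of $\rho(h)$ with the best approximation of the full eigentriple, and the need to check $\rho^*(h)\le C\rho(h)$ --- are consistent with, and slightly more detailed than, the paper's own (very terse) argument.
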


\section{Numerical results}
\label{se:num}

Our numerical results confirm the theoretical investigations of the previous
sections.

The numerical solution of a generalized eigenvalue problem such as those
arising from the discretization of~\eqref{eq:vareigF} and~\eqref{eq:vareigG}
can be challenging and in this paper we do not focus on the efficiency of the
solver but rather on the accuracy of the obtained results.

\DB
More specifically, we have to solve a nonsymmetric generalized eigenvalue
problem with several infinite eigenvalues. In our computations, we followed
two main strategies and we compared the two in order to make sure that no
artifact was introduced by the numerical method. We assembled the matrices in
FEniCS~\cite{fenics}. Then our first approach is to solve the eigenvalue
problem with SLEPc~\cite{Hernandez:2005:SSF} by reversing the matrix on the
left-hand side and the one on the right-hand side (see Case~(4) at the
beginning of Section~\ref{se:numap}); we used as options
``\texttt{shift-and-invert}'' with target ``\texttt{magnitude}''. The second
approach consists in exporting the matrices to Matlab and solve with the
built-in command ``\texttt{eigs}''.

More advanced numerical experiments are in progress, which will assess other
solvers and compare their performances.
\BD

\subsection{Numerical results on the square}
We start by taking the domain equal to the unit square $\Omega=]0,1[^2$. In
this case a pretty accurate estimate of the first eigenvalue is given by
$\omega=52.344691168$ if we consider
\[
\cA\bftau=\frac12\left(\bftau-\frac12\tr(\bftau)\Id\right),
\]
which corresponds to a Stokes problem, and homogeneous Dirichlet boundary
conditions everywhere, that is $\Gamma_N=\emptyset$ (see, for
instance,~\cite{GedickeKhan}).
Since we also want to investigate the symmetry of the formulation, we consider
three different mesh sequences: a symmetric and uniform mesh (CROSSED), a
non-symmetric and uniform mesh (RIGHT), and a non structured non symmetric
mesh (NONUNIF).
The meshes are indexed with respect to the number $N$ of subdivisions of the
square's sides.  The three meshes for $N=4$ are plotted in
Figure~\ref{fg:meshes}.

\begin{figure}
\includegraphics[width=4.1cm]{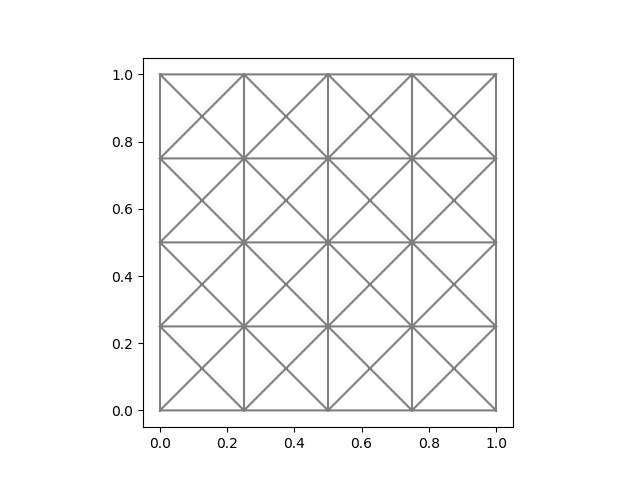}
\includegraphics[width=4.1cm]{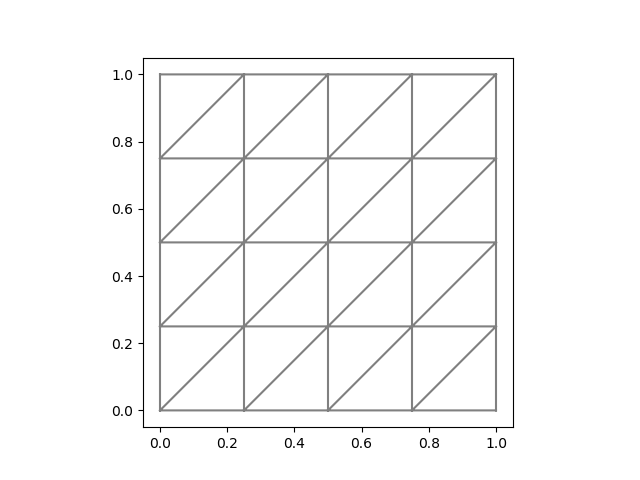}
\includegraphics[width=4.1cm]{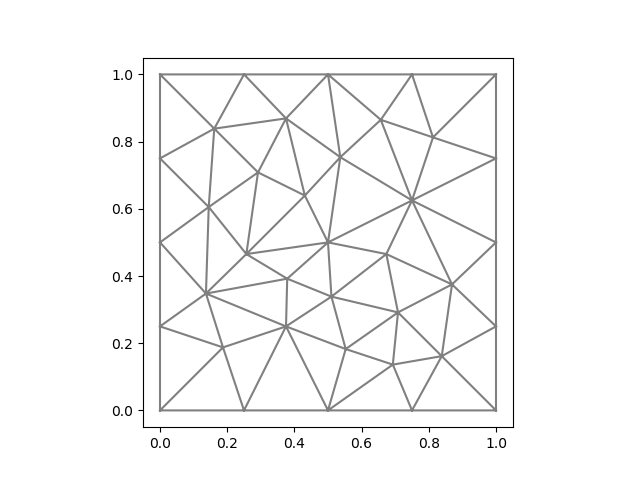}
\caption{Meshes on the unit square\label{fg:meshes}}
\end{figure}

In Table~\ref{tb:squareF} we report the numerical results corresponding to
computations performed with the two-field scheme~\eqref{eq:vareigFh} when
using the three mesh sequences with the level of refinement varying from $N=4$
to $N=12$. We considered a second order scheme based on Raviart--Thomas
elements for the approximation of $\Sigma_h$ (satisfying
assumption~\eqref{eq:approx}). The estimate of Theorem~\ref{th:1} predicts
fourth order of convergence for the eigenvalues which is confirmed by our
tests. It is interesting to observe that the fist computed eigenvalue is
always real in our tests.

\begin{table}
\footnotesize
\caption{Eigenvalues of the two-field formulation on the unit
square: $\Sigma_h=RT_1$, $U_h=P_2^c$\label{tb:squareF}}
\begin{center}
\begin{tabular}{l|r|r|r|r|r}
Mesh&$N=4$&$N=6$ (rate)&$N=8$ (rate)&$N=10$ (rate)&$N=12$ (rate)\\
\hline
CROSSED& 52.618734 &  52.400609 (3.9) &  52.362201 (4.0) &  52.351749 (4.1) &  52.348048 (4.1)\\
RIGHT& 54.132943 &  52.751624 (3.7) &  52.480276 (3.8) &  52.401472 (3.9) &  52.372369 (3.9)\\
NONUNIF& 52.744298 &  52.435687 (3.6) &  52.368139 (4.7) &  52.354017 (4.1) &  52.349733 (3.4)\\
\end{tabular}
\end{center}
\end{table}

We performed the same computations for the three-field formulation presented
in~\eqref{eq:vareigGh}. Also in this case we use a second order scheme based
on Raviart--Thomas elements (satisfying assumption~\eqref{eq:approx3}).
The corresponding results are reported in Table~\ref{tb:squareG}.
It turns out that also in this case the first computed eigenvalue is always
real and that the convergence properties match the theoretical results
with the exception of the convergence rate for $N=12$ on the non-uniform mesh.
In order to check better this phenomenon, we computed one more refinement
which is reported in Table~\ref{tb:square-nonunifF}. It is clear that the
overall convergence matches the expected fourth order rate and that the
non uniform behavior is due to the fact that the mesh sequence is not
structured.

\begin{table}
\footnotesize
\caption{Eigenvalues of the three-field formulation on the unit
square: $\Sigma_h=RT_1$, $U_h=P_2^c$, $\Phi_h=P_1^d$\label{tb:squareG}}
\begin{center}
\begin{tabular}{l|r|r|r|r|r}
Mesh&$N=4$&$N=6$ (rate)&$N=8$ (rate)&$N=10$ (rate)&$N=12$ (rate)\\
\hline
CROSSED& 52.523637 &  52.377459 (4.2) &  52.353859 (4.4) &  52.348025 (4.5) &  52.346144 (4.6)\\
RIGHT& 53.712947 &  52.621373 (3.9) &  52.426543 (4.2) &  52.375437 (4.4) &  52.358317 (4.5)\\
NONUNIF& 52.630390 &  52.398013 (4.1) &  52.355912 (5.4) &  52.348310 (5.1) &  52.347239 (1.9)
\end{tabular}
\end{center}
\end{table}

\begin{table}
\footnotesize
\caption{Eigenvalues of the three-field formulation on the unit
square: one more refinement for the non uniform mesh\label{tb:square-nonunifF}}
\begin{center}
\begin{tabular}{l|r|r|r|r|r}
Mesh&$N=6$ &$N=8$ (rate)&$N=10$ (rate)&$N=12$ (rate)&$N=14$ (rate)\\
\hline
NONUNIF& 52.398013 &  52.355912 (3.8) &  52.348310 (3.9) &  52.347239 (1.6) &  52.345561 (5.9)
\end{tabular}
\end{center}
\end{table}

Since the eigenvalue problems corresponding to the considered formulations are
not symmetric, it is interesting to investigate whether the computed
eigenvalues are complex or real. From the results that we are going show, it is
clear that in some cases the computed eigenvalues have a non-vanishing
imaginary part; this implies that in general the discrete formulations cannot
be reduced to symmetric problems. On the other hand, in most cases the
computed eigenvalues are real; this occurs, in particular on the symmetric
meshes.

In Table~\ref{tb:symmetric} we report the first five eigenvalues computed with
the three-field scheme on the non-uniform mesh for $N=10$ (similar results
could be presented for the two-field scheme as well). It is apparent that the
second and the third eigenvalue are complex conjugate.

\begin{table}
\footnotesize
\caption{First five eigenvalues computed with the three-field scheme on the
non-uniform mesh for $N=10$\label{tb:symmetric}}
\begin{center}
\begin{tabular}{l|r}
\#&Value\\
\hline
1&$52.348309870785620$\\
2&$92.163865261631784 - 0.000422328750065i$\\
3&$92.163865261631784 + 0.000422328750065i$\\
4&$128.2902654382040$\\
5&$154.3710922938166$
\end{tabular}
\end{center}
\end{table}

For the sake of comparison, Table~\ref{tb:nonsym} shows the same results on
the mesh for $N=11$ and we can see that in this case all the first five
eigenvalues are real. It is also interesting to observe that the second and
the third eigenvalues on the mesh for $N=10$ have the same real part, while
the corresponding eigenvalues on the mesh for $N=11$ are real and different
from each other. Clearly they are an approximation of the double eigenvalue
$\omega_2=\omega_3$.
In any case, this is perfectly compatible with the statements of
Theorems~\ref{th:1} and~\ref{th:2} where the difference
$|\omega_j-\omega_{j,h}|$ has to be understood in the sense of the distance in
the complex plane.

\begin{table}
\footnotesize
\caption{First five eigenvalues computed with the three-field scheme on the
non-uniform mesh for $N=11$\label{tb:nonsym}}
\begin{center}
\begin{tabular}{l|r}
\#&Value\\
\hline
1&52.346475661045424\\
2&92.147313995882541\\
3&92.151062887227738\\
4&128.2536615472890\\
5&154.2967136612170
\end{tabular}
\end{center}
\end{table}

\subsection{Numerical results on the L-shaped domain}

We conclude our numerical tests with computations on the L-shaped domain where
the re-entrant corner causes singularities in the solution. We consider a
uniform mesh (see Figure~\ref{fg:Lshaped} for the case $N=4$).

\begin{figure}
\includegraphics[width=4.1cm]{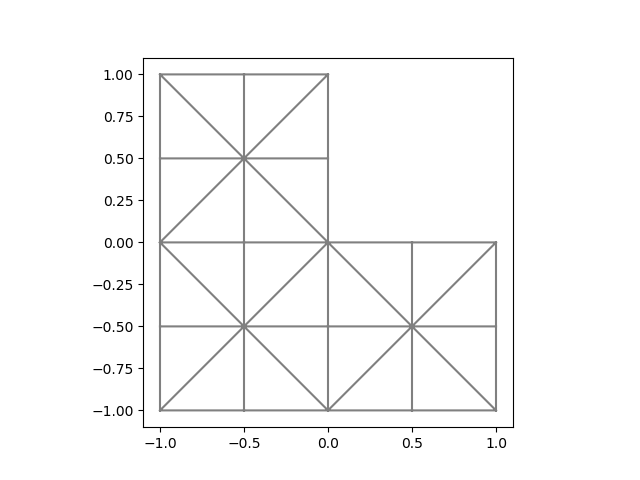}
\caption{Mesh of the L-shaped domain\label{fg:Lshaped}}
\end{figure}

An estimate of the first eigenvalues in this case has been provided
in~\cite{ruima} and corresponds to $\omega=32.13269464746$.
Tables~\ref{tb:LshapedF} and~\ref{tb:LshapedG} report on the computed
approximation with the two- and three-field
approximations~\eqref{eq:vareigFh} and~\eqref{eq:vareigGh}, respectively. As
expected, the singularity of the eigenfunction corresponding to the first
eigenvalue causes a reduction of the order of convergence.

\begin{table}
\footnotesize
\caption{Eigenvalues of the two-field formulation on the L-shaped domain:
$\Sigma_h=RT_1$, $U_h=P_2^c$\label{tb:LshapedF}}
\begin{center}
\begin{tabular}{l|r|r|r|r}
Mesh&$N=4$&$N=8$ (rate)&$N=16$ (rate)&$N=32$ (rate)\\
\hline
Uniform& 35.606285 &  31.937374 (4.2) &  31.871123 (-0.4) &  31.983573 (0.8)
\end{tabular}
\end{center}
\end{table}

\begin{table}
\footnotesize
\caption{Eigenvalues of the three-field formulation on the L-shaped domain:
$\Sigma_h=RT_1$, $U_h=P_2^c$, $\Phi_h=P_1^d$\label{tb:LshapedG}}
\begin{center}
\begin{tabular}{l|r|r|r|r}
Mesh&$N=4$&$N=8$ (rate)&$N=16$ (rate)&$N=32$ (rate)\\
\hline
Uniform& 34.132843 &  31.491151 (1.6) &  31.677105 (0.5) &  31.888816 (0.9)
\end{tabular}
\end{center}
\end{table}

Also in this case the first computed eigenvalue is real, however, also in
presence of a symmetric mesh, it turns out that there might be complex
eigenvalues. For instance, Table~\ref{tb:symL} reports some higher
eigenvalues computed for $N=8$. Also in this case the complex eigenvalues
converge towards real number according to the statement of
Theorems~\ref{th:1} and~\ref{th:2}; moreover, the presence of complex
eigenvalues depends on the mesh and on the level of refinements. The effect of
the mesh on complex eigenvalues will be the object of further investigation.

\begin{table}
\footnotesize
\caption{Some eigenvalues computed with the three-field scheme on the L-shaped
domain for $N=8$\label{tb:symL}}
\begin{center}
\begin{tabular}{l|r}
\#&Value\\
\hline
38& 339.9524318713583\\
39& 346.0018703851194\\
40& $350.8454478342160 - 2.5574107928386i$\\
41& $350.8454478342160 + 2.5574107928386i$\\
42& 359.0078935078376\\
43& 378.8779264703741
\end{tabular}
\end{center}
\end{table}

\section*{Acknowledgments}

The first author gratefully acknowledges support by the Deutsche
Forschungsgemeinschaft in the Priority Program SPP 1748 \textit{Reliable
simulation techniques in solid mechanics, Development of non standard
discretization methods, mechanical and mathematical analysis} under the
project number BE 6511/1-1.

%\bibliographystyle{amsplain}
%\bibliography{ref}

\providecommand{\bysame}{\leavevmode\hbox to3em{\hrulefill}\thinspace}
\providecommand{\MR}{\relax\ifhmode\unskip\space\fi MR }
% \MRhref is called by the amsart/book/proc definition of \MR.
\providecommand{\MRhref}[2]{%
  \href{http://www.ams.org/mathscinet-getitem?mr=#1}{#2}
}
\providecommand{\href}[2]{#2}

\end{document}